\newtheorem{teo}{Theorem}
\newtheorem{lema}{Lemma}
\begin{document}

\title{On the largest prime factor of the $k-$Fibonacci numbers}

\author{Jhon J. Bravo}
\address{Departamento de Matem\'aticas\\ Universidad del Cauca\\ Calle 5 No 4--70\\Popay\'an, Colombia. Current address: Instituto de Matem\'aticas\\ Universidad Nacional Aut\'onoma de M\'exico\\ Circuito Exterior, Ciudad Universitaria\\ C. P. 04510, M\'exico D.F.\\ M\'exico.}
\email{jbravo@unicauca.edu.co}

\author{Florian Luca}
\address{Centro de Ciencias Matem\'aticas \\ Universidad Nacional Aut\'onoma de M\'exico\\ Ap. Postal 61--3 (Xangari), C.P. 58089\\ Morelia, Michoac\'an, M\'exico.}
\email{fluca@matmor.unam.mx}


\date{\today}

\begin{abstract}
Let $P(m)$ denote the largest prime factor of an integer $m\geq 2$, and put $P(0)=P(1)=1$. For an integer $k\geq 2$, let $(F_{n}^{(k)})_{n\geq 2-k}$ be the $k-$generalized Fibonacci sequence which starts with $0,\ldots,0,1$ ($k$ terms) and each term afterwards is the sum of the $k$ preceding terms.  Here, we show that if $n\geq k+2$, then $P(F_n^{(k)})>c\log\log n$, where $c>0$ is an effectively computable constant. Furthermore, we determine  all the $k-$Fibonacci numbers $F_n^{(k)}$ whose largest prime factor is less than or equal to 7.  

\medskip  

\noindent\textbf{Keywords and phrases.} \,Fibonacci numbers, greatest prime factor, lower bounds for nonzero linear forms in logarithms of algebraic numbers.

\noindent\textbf{2000 Mathematics Subject Classification.}\, 11B39, 11J86.

\end{abstract}

\maketitle


\section{Introduction}

Let $k\ge 2$ be an integer. We consider a generalization of the Fibonacci sequence called the {\it $k-$generalized Fibonacci
sequence} $F_{n}^{(k)}$ defined as
\begin{equation*}\label{recurrencia}
F_{n}^{(k)}=F_{n-1}^{(k)}+F_{n-2}^{(k)}+\cdots+F_{n-k}^{(k)},
\end{equation*}
with the initial conditions $F^{(k)}_{-(k-2)}=F^{(k)}_{-(k-3)}=\cdots=F^{(k)}_0=0$ and $F^{(k)}_1=1$. We call $F_{n}^{(k)}$ the $n^{th}$ \emph{ $k-$generalized Fibonacci number}, or for simplicity, the $n^{th}$ \emph{$k-$Fibonacci number}. Note that for $k=2$, we obtain the classical Fibonacci sequence
\[
F_0=0, \quad F_1=1 \quad\text{and}\quad F_{n}=F_{n-1}+F_{n-2} \quad\text{for $n\geq 2$}
\]
\[
(F_{n})_{n\ge 0}=\{0,1,1,2,3,5,8,13,21,34,55,89,144,233,377,610,\ldots\}.
\]
If $k=3$, the Tribonacci sequence appears
\[
(T_{n})_{n\ge -1}=\{0,0,1,1,2,4,7,13,24,44,81,149,274,504,927,1705,\ldots\}.
\]
If $k=4$, we get the Tetranacci sequence
\[
(F_{n}^{(4)})_{n\ge -2}=\{0, 0, 0, 1, 1, 2, 4, 8, 15, 29, 56, 108, 208, 401,773,1490,\ldots\}.
\]
For every integer $m$, let $P(m)$ denote the largest prime factor of $m$, with the usual convention that $P(0)=P(\pm 1)= 1$.  The problem of finding lower bounds for the greatest prime factor of terms of linear recurrence sequences has attracted great attention from several number--theorists.  There are many papers in the literature which address interesting results about this problem. In this paper, we follow the same approach for the $k-$generalized Fibonacci sequence; that is, we are interested in finding effective lower bounds for $P (F_n^{(k)})$ in terms of both the parameters $k$ and $n$.  

We prove the following result, which in particular shows that for every $k\geq 2$, $P(F_n^{(k)})\rightarrow \infty$ as $n\rightarrow\infty$. 

\begin{teo} \label{teo1}
The inequality 
\[
 P(F_n^{(k)})>\frac{1}{84}\log\log n
\]
holds for all $n\geq k+2$. 
\end{teo}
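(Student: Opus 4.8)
The plan is to assume, towards a contradiction, that $P(F_n^{(k)})\le y:=\tfrac{1}{84}\log\log n$ for some $n\ge k+2$, and to derive a bound on $n$ from the fact that $F_n^{(k)}$ is then $y$-smooth together with a lower bound for a nonzero linear form in logarithms (Matveev's theorem). First, since $F_n^{(k)}\ge F_{k+2}^{(k)}=2^k-1\ge 3$ whenever $n\ge k+2$, one always has $P(F_n^{(k)})\ge 2$, so the asserted inequality is trivial as soon as $\tfrac{1}{84}\log\log n<2$, that is for all $n$ below an explicit (enormous) bound; hence it suffices to handle $n$ beyond that bound. Write $F_n^{(k)}=p_1^{a_1}\cdots p_s^{a_s}$ with $2\le p_1<\cdots<p_s\le y$; then $s\le\pi(y)$ and, from $F_n^{(k)}<\alpha^{n-1}<2^{n-1}$, every $a_i<n$. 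I shall use the standard facts on the sequence: $\alpha=\alpha(k)\in(\tfrac{1+\sqrt5}{2},2)$ is the unique zero of $\Psi_k(x)=x^k-x^{k-1}-\cdots-1$ outside the unit disc and $\Psi_k$ is its minimal polynomial, so $h(\alpha)=\tfrac1k\log\alpha$; the Binet-type estimate $|F_n^{(k)}-g(\alpha,k)\alpha^{n-1}|<\tfrac12$ with $g(x,k)=\tfrac{x-1}{(k+1)x-2k}$; the bounds $\alpha^{n-2}\le F_n^{(k)}\le\alpha^{n-1}$; the height bound $h(g(\alpha,k))<3\log k+3$; and $|g(\alpha_i,k)|\le 2$ for every conjugate $\alpha_i$ of $\alpha$. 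One then splits into two cases according to the size of $k$ (the threshold being of the rough shape $k$ versus $n^{1/\log\log n}$, chosen so that both cases close).

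In the case where $k$ is large, $\alpha=2-\delta$ with $0<\delta<2^{1-k}$ and the ``delayed'' recurrence $F_{m+1}^{(k)}=2F_m^{(k)}-F_{m-k}^{(k)}$ shows that $c_n:=2^{n-2}-F_n^{(k)}$ satisfies $c_{k+1}=0$ and $c_{m+1}=2c_m+F_{m-k}^{(k)}$, whence $0<c_n\le (n-k)2^{n-k-3}<n\,2^{n-k}$, so $\bigl|F_n^{(k)}2^{-(n-2)}-1\bigr|<4n\,2^{-k}<\tfrac12$. Setting $\Gamma'=a_1\log p_1+\cdots+a_s\log p_s-(n-2)\log 2$ we have $\Gamma'\neq0$ (because $c_n>0$) and $0<|\Gamma'|<8n\,2^{-k}$, so $\log|\Gamma'|<\log(8n)-k\log 2$. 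Matveev's lower bound applied to $\Gamma'$, a linear form in the $s+1\le\pi(y)+1$ rationals $p_1,\dots,p_s,2$ (degree parameter $1$, heights $\log p_i\le\log y$, $B=n$), gives $\log|\Gamma'|>-C_1$, and since $s\le\pi(y)\le\tfrac1{84}\log\log n$ every factor in $C_1$—namely $30^{s+4}$, $(s+1)^{4.5}$, $\prod_i\log p_i\le(\log y)^{\pi(y)}$, $1+\log n$—contributes $o(\log n)$ to the exponent, so $C_1=n^{o(1)}$. Combining, $k\log 2<C_1+\log(8n)=n^{o(1)}$, contradicting that $k$ is large.

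In the case where $k$ is small, $|F_n^{(k)}-g(\alpha,k)\alpha^{n-1}|<\tfrac12$ and $F_n^{(k)}\ge\alpha^{n-2}$ give $\bigl|F_n^{(k)}(g(\alpha,k)\alpha^{n-1})^{-1}-1\bigr|<2\alpha^{-(n-1)}$, so $\Gamma:=a_1\log p_1+\cdots+a_s\log p_s-\log g(\alpha,k)-(n-1)\log\alpha$ satisfies $0<|\Gamma|<4\alpha^{-(n-1)}$ and $\log|\Gamma|<-\tfrac25 n$ for $n$ large. Here $\Gamma\neq0$ by a conjugation argument: $\Gamma=0$ forces $F_n^{(k)}=g(\alpha,k)\alpha^{n-1}\in\mathbb{Z}$, and applying a $\mathbb{Q}$-automorphism sending $\alpha$ to a conjugate $\alpha_i$ with $|\alpha_i|<1$ yields $F_n^{(k)}=|g(\alpha_i,k)|\,|\alpha_i|^{n-1}<2$, contradicting $F_n^{(k)}\ge 3$. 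Now apply Matveev to $\Gamma$, a linear form in $p_1,\dots,p_s,g(\alpha,k),\alpha$ over $\mathbb{Q}(\alpha)$ with degree parameter $k$, $A_i=k\log p_i$ for $1\le i\le s$, $A$-value $k(3\log k+3)$ for $g(\alpha,k)$ and $\log\alpha=\max\{k\,h(\alpha),|\log\alpha|\}$ for $\alpha$, and $B=n$. Because $k$ is small and $s\le\pi(y)\le\tfrac1{84}\log\log n$, every factor of the resulting bound $M$—namely $30^{s+4}$, $(s+2)^{4.5}$, $k^2(1+\log k)$, $\prod_iA_i\le k^{s}(\log y)^{s}\cdot k(3\log k+3)\cdot\log\alpha$, $1+\log n$—is $n^{o(1)}$; this is precisely where the constant $\tfrac1{84}$ is spent, forcing both $k^{s}\le n^{1/84}$ and $30^{\pi(y)}=(\log\log n)^{O(1)}$. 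Hence $\log|\Gamma|>-M$ with $M=n^{o(1)}$, and comparison with $\log|\Gamma|<-\tfrac25 n$ gives $\tfrac25 n<n^{o(1)}$, impossible for $n$ large. Together with the trivial estimate for small $n$ this proves the theorem.

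I expect the case of small $k$ to be the main obstacle. There the number field has degree $k$, so the factors $k^2(1+\log k)$ and $A_i=k\,h(p_i)$, $A_{g}\asymp k\log k$ produced by Matveev's inequality threaten to overpower the exponential smallness $\alpha^{-n}$ of $\Gamma$. What rescues the argument is the two-sided pinch on $k$—the large-$k$ case forces $k$ to be at most quasi-polylogarithmic in $n$—together with the smoothness hypothesis, which caps the number of prime factors at $s\le\pi(y)\le\tfrac1{84}\log\log n$ and so keeps $30^{s}$ and the product of heights $k^{s}\prod_i\log p_i$ of size $n^{o(1)}$. Carrying out these estimates with explicit constants, choosing the threshold between the two cases, and identifying the exact bound below which the trivial estimate $P(F_n^{(k)})\ge2$ takes over, is where the real bookkeeping lies.
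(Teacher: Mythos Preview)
Your outline is correct and rests on the same two pillars as the paper: Matveev's theorem applied to the Binet-type form over $\mathbb{Q}(\alpha)$, and, for large $k$, the approximation $F_n^{(k)}\approx 2^{n-2}$ yielding a purely rational linear form. The nonvanishing arguments, the height estimate for $g(\alpha,k)$, and the elementary recurrence bound $0<2^{n-2}-F_n^{(k)}<n\,2^{n-k}$ are all fine (the latter is a pleasant alternative to the paper's analytic estimate via $|\alpha-2|<2^{1-k}$ and the Mean Value Theorem).

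The packaging, however, differs from the paper in a way worth noting. You argue by contradiction, fix $y=\tfrac{1}{84}\log\log n$, bound $s\le\pi(y)\le y$, pick a threshold $k\approx n^{1/\log\log n}$, and show each case forces $n$ (or $k$) below a function that is $n^{o(1)}$; the trivial bound $P\ge 2$ then absorbs all $n<e^{e^{168}}$. The paper instead works forward with explicit constants: a single application of Matveev over $\mathbb{Q}(\alpha)$ gives the key lemma $\log n<30s\log s+3s\log k+3\log(9s+k)$, and then three cases ($k\le s$; $s<k$ with $n\ge 2^{k/2}$; $s<k$ with $n<2^{k/2}$) are closed by elementary manipulations together with $p_s>s\log s$. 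This yields more: in the first two cases the paper actually obtains $P(F_n^{(k)})>\tfrac{1}{40}\log n$ and $P(F_n^{(k)})>\tfrac{1}{84}\log n$, respectively, and only the third case drops to $\tfrac{1}{23}\log\log n$. Your asymptotic $o(1)$ bookkeeping is cleaner to read but hides these sharper regime-dependent bounds and leaves the verification that $\tfrac{1}{84}$ (rather than some unspecified positive constant) really works to the reader; as you yourself note, the explicit constants and the exact threshold still need to be written out for a complete proof.
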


Our method is roughly as follows. We use lower bounds for linear forms in logarithms of algebraic numbers to bound $\log n$ in terms of $k$ and $P(F_n^{(k)})$. The result is obtained easily when $k$ is small.  When $k$ is large, we use the fact that the dominant root of the $k-$generalized Fibonacci sequence is exponentially close to $2$, so we can replace this root by $2$ in our calculations with linear forms in logarithm and finish the job.

The same ideas mentioned in the previous paragraph and the LLL algorithm will be used at the end of the paper, in order to find all the $k-$Fibonacci numbers whose greatest prime factor is less than or equal to 7.


\section{Some tools}

We begin by noting that the first $k+1$ non--zero terms in the $k-$generalized Fibonacci sequence are powers of two, namely
\begin{equation*}\label{1-eros}
F_{1}^{(k)}=1,\,\,F_{2}^{(k)}=1,\,\, F_{3}^{(k)}=2,\,\, F_{4}^{(k)}=4,\,\ldots,\, F_{k+1}^{(k)}=2^{k-1},
\end{equation*}
while the next term in the above sequence is $F_{k+2}^{(k)}=2^{k}-1$. As a matter of fact, in \cite{BL2}, we showed that if $n\geq k+2$, then the only $k-$Fibonacci number which is a power of 2 is $F_6=8$.  From now on, we can assume $n\geq k+2$, since otherwise $P(F_n^{(k)})$ is either 1 or 2.

On the other hand, it is known that the characteristic polynomial of the $k-$generalized Fibonacci sequence $(F_n^{(k)})_n$, namely 
\[
\Psi_k(x)=x^k-x^{k-1}-\cdots-x-1,
\]
is irreducible over $\mathbb{Q}[x]$ and has just one root outside the unit circle. Throughout this
paper, $\alpha:=\alpha(k)$ denotes that single root, which is located between $2(1-2^{-k})$ and 2 (see \cite{DAW}).  To simplify notation, in general we omit the dependence  on $k$ of $\alpha$.

The following ``Binet--like formula" for $F_n^{(k)}$ appears in  Dresden \cite{GPD}:
\begin{equation}
\label{eq:binet}
F_n^{(k)}=\sum_{i=1}^{k}\frac{\alpha_i-1}{2+(k+1)(\alpha_i-2)}\alpha_i^{n-1},
\end{equation}
where $\alpha=\alpha_1,\ldots,\alpha_k$ are the roots of $\Psi_k(x)$. It was also proved in \cite{GPD} that the contribution of the roots which are inside the unit circle 
to the formula \eqref{eq:binet} is very small, namely that the approximation 
\begin{equation} 
\label{error}
\left|F_n^{(k)}-\frac{\alpha-1}{2+(k+1)(\alpha-2)}\alpha^{n-1}\right|<\frac{1}{2}  \quad \text{holds~for all}\quad n\geq 2-k.
\end{equation}
We will use the estimate (\ref{error}) later. Furthermore, in \cite{BL} we proved that
\begin{equation}\label{deskfib}
\alpha^{n-2} \leq F_n^{(k)}\leq \alpha^{n-1} \quad \text{for all}\quad n\geq 1.
\end{equation}
 
For $n\geq 3$, let $F_n^{(k)}= p_1^{\beta_1}\cdots p_s^{\beta_s}$ be the prime factorization of the positive integer $F_n^{(k)}$, where $2=p_1 <\cdots <p_r <\cdots$ is the increasing sequence of prime numbers, and the numbers $\beta_i$ for $i=1,\ldots,s$ are nonnegative integers with $\beta_s\geq 1$.

By the right--hand side of inequality \eqref{deskfib}, we have that $p_1^{\beta_1}\cdots p_s^{\beta_s}\leq \alpha^{n-1}<2^{n-1}$. Thus,
\[
\sum_{i=1}^{s}\beta_i\log 2\leq \sum_{i=1}^{s}\beta_i\log p_i<(n-1)\log 2,
\]
giving $\sum_{i=1}^{s}\beta_i <n-1$. In particular
\begin{equation}\label{util1}
\beta_i<n-1 \quad \text{for all} \quad i=1,\ldots,s.
\end{equation}

If $k\geq 3$, then it is a straightforward exercise to check that $1/\log\alpha<2$ by using the fact that $2(1-2^{-k})<\alpha$. If $k=2$, then $\alpha$ is the golden section so $1/\log\alpha=2.078\ldots<2.1$. In any case, the inequality $1/\log\alpha<2.1$ holds for all $k\geq 2$. We record this estimate for future referencing.

To conclude this section, we consider for an integer $r\geq 2$, the function 
\begin{equation}\label{fun-f}
f_r(x)=\frac{x-1}{2+(r+1)(x-2)} \quad \text{for} \quad x>2(1-2^{-r}). 
\end{equation}
We can easily see that 
\begin{equation}
\label{eq:fsprime}
f_r^{\prime}(x)=\frac{1-r}{(2+(r+1)(x-2))^2}\quad \text{for all}\quad x>2(1-2^{-r}),
\end{equation}
and $2+(r+1)(x-2)\geq 1$ for all $x>2(1-2^{-r})$ and $r\geq 3$. We shall use this fact later.


\section{Preliminary estimate} \label{prelim-estimate}

Here, we will use a linear form in logarithms to get an inequality involving $n,k$ and $s$. This puts a bound on $\log n$ in terms of $\log k$ and $\log s$. Because of our assumptions, we have that $F_n^{(k)}$, except in the case $F_6=8$, is not a power of 2. Hence, we can suppose $s\geq 2$ and so we get easily that $n\geq 4$ and $p_s\geq 3$.

By using the prime factorization of $F_n^{(k)}$ and (\ref{error}), we obtain that 
\begin{equation}\label{des-al-beta}
|p_1^{\beta_1}\cdots p_s^{\beta_s}-f_k(\alpha)\alpha^{n-1}|< \frac{1}{2}.
\end{equation}
Dividing both sides of the above inequality by $f_{k}(\alpha)\alpha^{n-1}$, which is positive because $\alpha>1$ and 
$2^{k}>k+1$, so $2>(k+1)(2-(2-2^{-k+1}))>(k+1)(2-\alpha)$, we obtain the inequality
\begin{equation} \label{deslambda}
\left|p_1^{\beta_1}\cdots p_s^{\beta_s}\cdot\alpha^{-(n-1)}\cdot(f_{k}(\alpha))^{-1}-1\right|<\frac{2}{\alpha^{n-1}},
\end{equation}
where we used the facts $2+(k+1)(\alpha-2)<2$ and $1/(\alpha-1)<2$, which are easily seen. 

We shall need a result of E.M. Matveev \cite{Matveev} about linear forms in logarithms. But first, some notation. For an algebraic number $\eta$ we write $h(\eta)$ for its logarithmic height, whose formula is
\[
h(\eta):=\frac{1}{d}\left(\log a_0+\sum_{i=1}^{d}\log\left(\max\{|\eta^{(i)}|,1\}\right)\right),
\]
with $d$ being the degree of $\eta$ over $\mathbb{Q}$ and
\begin{equation}
\label{eq:f}
f(X):=a_0\prod_{i=1}^{d}(X-\eta^{(i)}) \in \mathbb{Z}[X]
\end{equation}
being the minimal primitive polynomial over the integers having positive leading coefficient $a_0$ and $\eta$ as a root.

With this notation, Matveev (see \cite{Matveev} or Theorem 9.4 in \cite{Bug}) proved the following deep result:

\begin{lema}\label{teoMatveev}
Let $\mathbb{K}$ be a number field of degree $D$ over $\mathbb{Q}$, $\gamma_1, \ldots, \gamma_t$ be positive reals of $\mathbb{K}$, and $b_1,\ldots,b_t$ rational integers. Put 
\[
B\geq \max\{|b_1|,\ldots,|b_t|\},
\]
and 
\[
\Lambda:=\gamma_1^{b_1}\cdots\gamma_t^{b_t}-1.
\]
Let $A_1,\ldots,A_t$ be real numbers such that 
\[
A_i\geq \max\{Dh(\gamma_i),|\log \gamma_i|, 0.16\}, \quad \text{$i=1,\ldots,t$}.
\]
Then, assuming that $\Lambda\neq 0$, we have
\[
|\Lambda|>\exp\left(-1.4\times 30^{t+3}\times t^{4.5}\times D^2(1+\log D)(1+\log B)A_1\cdots A_t\right).
\]
\end{lema}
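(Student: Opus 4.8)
The statement is Matveev's theorem, one of the central explicit results in the theory of linear forms in logarithms of algebraic numbers, and in a paper of this kind the only sensible approach is to quote it from the original source \cite{Matveev} or from the exposition in \cite{Bug}; a self--contained proof belongs to transcendence theory and is far longer than the present paper. Still, let me indicate the shape such a proof takes, following the general strategy of Baker's method in one of its modern refinements via interpolation determinants. Assume $\Lambda\neq 0$ and, for contradiction, that $|\Lambda|$ is smaller than the asserted bound. Since $\Lambda+1=\gamma_1^{b_1}\cdots\gamma_t^{b_t}$, choosing a suitable branch of the logarithm converts the hypothesis into the statement that the linear form $L:=b_1\log\gamma_1+\cdots+b_t\log\gamma_t$ is extraordinarily small; the whole point is to show that such a form cannot be this small unless it vanishes, which would impose a multiplicative relation among the $\gamma_i$ contradicting $\Lambda\neq 0$.

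The first step is to build an auxiliary object encoding this tension. In the classical version one applies Siegel's lemma to construct an auxiliary polynomial in $t$ variables with algebraic--integer coefficients whose associated function, together with its low--order derivatives, nearly vanishes at many points of the form $z=\ell$; the sharper route, which is what makes the explicit constants attainable, replaces this by an \emph{interpolation determinant} whose rows correspond to the monomial--exponential functions $z^{\tau_0}\gamma_1^{\tau_1 z}\cdots\gamma_t^{\tau_t z}$ and whose columns correspond to evaluation points together with derivative orders. Avoiding the combinatorial loss of Siegel's lemma in this way is precisely what permits numerical constants as small as $1.4\times 30^{t+3}\times t^{4.5}$.

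The heart of the argument is a clash between two estimates for this determinant. An \emph{analytic} estimate, obtained from the maximum--modulus principle and a Schwarz--lemma argument on nested discs, exploits the smallness of $L$ (so that $e^{Lz}$ is nearly constant on the relevant region) to force the determinant to be extremely small. A competing \emph{arithmetic} estimate of Liouville type bounds the house and the denominator of the same determinant, nonzero and algebraic, through the heights $h(\gamma_i)$ encoded in the $A_i$ and through the field degree $D$, and forces it to be not too small unless it is exactly zero. Balancing the free parameters --- the number of functions, the derivative orders, and the extrapolation radius --- against $B$, the $A_i$, $D$ and $t$ produces a contradiction, and unwinding the inequalities yields exactly the stated lower bound on $|\Lambda|$.

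The main obstacle, and the reason this is a deep theorem rather than a routine computation, is the \emph{extrapolation} step: one must propagate the near--vanishing of the auxiliary object from a small set of points to a much larger one while retaining uniform, fully explicit control of every constant, and then invoke a \emph{zero (multiplicity) estimate} of Philippon--Waldschmidt type to exclude identical vanishing. Carrying out this optimization so that the final constant is as small as $1.4\times 30^{t+3}\times t^{4.5}$ requires a long and delicate bookkeeping that is well beyond what we need; accordingly we simply apply the result in the form stated.
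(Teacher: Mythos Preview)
Your assessment is correct: the paper does not prove this lemma at all but simply states it with a citation to \cite{Matveev} and \cite{Bug}, exactly as you anticipate in your opening sentence. Your additional sketch of Baker's method via interpolation determinants is accurate in spirit and goes well beyond what the paper provides, but since the paper offers no proof there is nothing further to compare.
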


In order to apply Lemma \ref{teoMatveev}, we take $t:=s+2$ and
\[
\gamma_i:=p_i \quad\text{for all} \quad i=1,\ldots,s,\quad \gamma_{s+1}:=\alpha, \quad \gamma_{s+2}:=f_k(\alpha).
\]
We also take the exponents $b_i:=\beta_i$ for all $i=1,\ldots,s$, $b_{s+1}:=-(n-1)$ and $b_{s+2}:=-1$. Hence, 
\begin{equation*}\label{deflambda}
\Lambda:=\prod_{i=1}^{s+2}\gamma_i^{b_i}-1.
\end{equation*}
Observe that the absolute value of $\Lambda$ appears in the left--hand side of inequality (\ref{deslambda}). The algebraic number field containing $\gamma_i's$ is $\mathbb{K}:=\mathbb{Q}(\alpha)$. As $\alpha$ is of degree $k$ over $\mathbb{Q}$, it follows that $D:=[\mathbb{K}:\mathbb{Q}]=k$. To see that $\Lambda\neq 0$, observe that imposing that $\Lambda=0$ yields
\[
p_1^{\beta_1}\cdots p_s^{\beta_s}=\frac{\alpha-1}{2+(k+1)(\alpha-2)}\alpha^{n-1}.
\]
Conjugating the above relation by some automorphism of the Galois group of the splitting field of $\Psi_k(x)$ over $\mathbb{Q}$ and then taking absolute values, we get that for any $i > 1$, we have
\[
p_1^{\beta_1}\cdots p_s^{\beta_s}=\left|\frac{\alpha_i-1}{2+(k+1)(\alpha_i-2)} \alpha_i^{n-1}\right|.
\]
But the above relation is not possible since its left--hand side is greater than or equal to $3$, while its right--hand side is smaller than 
$2/(k-1)\leq 2$ because $|\alpha_i|<1$ and
\begin{equation}\label{eq:useful2}
|2+(k+1)(\alpha_i-2)|\geq (k+1)|\alpha_i-2|-2>k-1.
\end{equation}
Thus, $\Lambda\neq 0$.

Since $h(\gamma_i)=\log p_i\leq \log p_s$ for all $i=1,\ldots,s$, it follows that we can take $A_i:=k\log p_s$ for all $i=1,\ldots,s$. Furthermore, since $h(\gamma_{s+1})=(\log\alpha)/k<(\log 2)/k=(0.693147\ldots)/k$, it follows that we can take $A_{s+1}:=0.7$.   

We now need to estimate $h(\gamma_{s+2})$. First, observe that
\begin{equation}\label{desh3} 
h(\gamma_{s+2})=h(f_k(\alpha))=h\left(\frac{\alpha-1}{2+(k+1)(\alpha-2)}\right).
\end{equation}

Put
\[
g_{k}(x)=\prod_{i=1}^{k}\left(x-\frac{\alpha_i-1}{2+(k+1)(\alpha_i-2)}\right)\in {\mathbb Q}[x].
\]
Then the leading coefficient $a_0$ of the minimal polynomial of $(\alpha-1)/(2+(k+1)(\alpha-2))$ over the integers (see definition \eqref{eq:f}) divides $\prod_{i=1}^{k}(2+(k+1)(\alpha_i-2))$. But,
\[
\left|\prod_{i=1}^{k}(2+(k+1)(\alpha_i-2))\right|=(k+1)^{k}\left|\prod_{i=1}^{k}\left(2-\frac{2}{k+1}-\alpha_i\right)\right|=(k+1)^{k}\left|\Psi_{k}\left(2-\frac{2}{k+1}\right)\right|.
\]
Since 
$$
|\Psi_{k}(y)|<\max\{y^{k},1+y+\cdots+y^{k-1}\}<2^{k}\quad {\text{\rm for~all}}\quad 0<y<2,
$$
it follows that 
$$
a_0\leq (k+1)^{k} \left|\Psi_{k}\left(2-\frac{2}{k+1}\right)\right|< 2^{k}\,(k+1)^{k}.
$$
Hence, 
\begin{align}
h\left(\frac{\alpha-1}{2+(k+1)(\alpha-2)}\right)=&\,
\frac{1}{k}\left(\log a_0+\sum_{i=1}^{k}\log\max\left\{\left|\frac{\alpha_i-1}{2+(k+1)(\alpha_i-2)}\right|,1\right\}\right)\notag\\
<&\,\frac{1}{k}\,(k\log 2+k\log(k+1))\notag\\
=&\,\log(k+1)+\log 2<3\log k.   \label{halpha31}
\end{align}

In the above inequalities, we used the facts $\log(k+1)+\log 2<3\log k$ for all $k\geq 2$ and
\[
\left|\frac{\alpha_i-1}{2+(k+1)(\alpha_i-2)}\right|<1 \quad\text{for all $1\leq i \leq k$},
\]
which holds because for $k=2$, $\alpha$ is the golden section and so $\alpha_2=(1-\sqrt{5})/2$, thus
\[
\left|\frac{\alpha_2-1}{2+3(\alpha_2-2)}\right|=0.276\ldots<1 \quad\text{and} \quad \left|\frac{\alpha-1}{2+3(\alpha-2)}\right|=0.723\ldots<1, 
\]
while for $k\geq 3$, $|2+(k+1)(\alpha_i-2)|>k-1\geq 2$ for all $i>1$ (see \eqref{eq:useful2}), and $2+(k+1)(\alpha-2)>1$, which is a straightforward exercise to check using the fact that $2(1-2^{-k})<\alpha<2$.

So, combining (\ref{desh3}) and (\ref{halpha31}) we obtain that $h(\gamma_{s+2})< 3\log k$, therefore we can take $A_{s+2}:=3k\log k$. By recalling that $b_i=\beta_i<n-1$ for all $i=1,\ldots,s$ (see \eqref{util1}), we can take $B:=n-1$. Applying Lemma \ref{teoMatveev} to get a lower bound for $|\Lambda|$ and comparing this with inequality (\ref{deslambda}), we get
\begin{equation} \label{aplic1}
\exp(-C(k,s)\,(1+\log (n-1))\,(0.7)\,(3k\log k)\prod_{i=1}^{s}k\log p_s)<\frac{2}{\alpha^{n-1}},
\end{equation}
where
\begin{align*}
C(k,s) :=& \,  1.4\times 30^{s+5}\times (s+2)^{4.5}\times k^2 \times(1+\log k)\\
 <&\,  7.7\times 10^{8}\, 30^s\, s^{4.5}\,k^2\,(1+\log k),
\end{align*} 
where we used the fact that $s+2\leq 2s,$ which holds for all $s\geq 2$.
 
Taking logarithms on both sides of \eqref{aplic1}, we see that
\[
(n-1)\log\alpha-\log 2 < 7.7\times 10^{8}\, 30^s\, s^{4.5}\,k^2\,(1+\log k)\,(1+\log(n-1))\,(0.7)\,(3k\log k)\,(k^s(\log p_s)^s),
\]
which leads to
\[
n-1<2.1\times 10^{10}\,30^s\, s^{4.5}\,k^{s+3}\,\log^2 k\,\log(n-1)\,(\log p_s)^s,
\]
where we used the facts $1+\log k\leq 3\log k$ for all $k\geq 2$, $1+\log(n-1)\leq 2\log(n-1)$ for all $n\geq 4$ and $1/\log\alpha<2.1$ for all $k\geq 2$.

Now, by recalling that $k<n-1$ because of $n\geq k+2$, and using the fact that the inequality $p_m<m^2$ holds for all $m\geq 2$, we obtain:\footnote{Actually, Corollary from Theorem 3 on p. 69 of \cite{BS} states that $p_m<m(\log m+\log\log m)$ holds for all $m\geq 6$.}
\begin{equation}\label{deslog}
\frac{n-1}{\log^3(n-1)}<2.1\times 10^{10}\,(60\log s)^s\,s^{4.5}\,k^{s+3}.
\end{equation}

One can check that for all $A>10$, the inequality $x/\log^3 x <A$ implies $x<64 A\log^3 A$. To see why, assume that $x\ge 64 A\log^3 A$. Since the function $x\mapsto x/\log^3 x$ is increasing for $x>e^3$, and $64A \log^3 A >e^3$ when $A>4$, we get that
\[
A>\frac{x}{\log^3 x}\geq \frac{64 A\log^3 A}{\log^3(64A\log^3 A)}.
\]

Keeping just the left and the right hand sides of the above inequality and omitting the middle term, we get an  inequality equivalent to 
\[
64A\log^3 A>A^4,
\]
or $4\log A>A$, or $A^4>e^A$, which is false for $A>10$.  Applying this with $A:=2.1\times 10^{10}\,(60\log s)^s\,s^{4.5}\,k^{s+3}$, we get that inequality (\ref{deslog}) yields
\[
n-1<\, 1.35\times 10^{12}\,(60\log s)^s\,s^{4.5}\,k^{s+3}\,(\log(2.1\times 10^{10}\,(60\log s)^s\,s^{4.5}\,k^{s+3}))^3.
\]
On the right--most logarithm, we have that
\begin{align*}
\log(2.1\times 10^{10}\,(60\log s)^s\,s^{4.5}\,k^{s+3})<&\,23.8+s\log(60\log s)+4.5\log s+(s+3)\log k\\
<&\, s\,(9s+k),
\end{align*}
where we used the inequalities $23.8/s+\log(60\log s)+(4.5/s)\log s<9s$ and $(1+3/s)\log k\leq (5/2)\log k<k$ which hold for all $s\geq 2$ and $k\geq 2$.  Hence, we get that 
\[
n-1<\,1.35\times 10^{12}\,(60\log s)^s\,s^{7.5}\,k^{s+3}\,(9s+k)^3,
\]
so
\[
n<\,1.36\times 10^{12}\,\,(60\log s)^s\,s^{7.5}\,k^{s+3}\,(9s+k)^3,
\]
giving
\begin{align*}
\log n<&\,\log(1.36\times 10^{12})+s\log(60\log s)+7.5\log s+(s+3)\log k+3\log(9s+k)\\
<&\,28+s\log(60\log s)+7.5\log s+(s+3)\log k+3\log(9s+k)\\
<&\,30s\log s+3s\log k+3\log(9s+k).
\end{align*}

In the last chain of inequalities, we have used that inequalities $s+3< 3s$ and $28+s\log(60\log s)+7.5\log s<30s\log s$  hold for all $s\geq 2$. We record what we have just proved as a lemma.

\begin{lema}\label{cota_nl}
If $n\geq k+2$ and $F_n^{(k)}=p_1^{\beta_1}\cdots p_s^{\beta_s}$ is the prime factorization of $F_n^{(k)}$ with $\beta_s\geq 1$, then the inequality  
\[
\log n<30s\log s+3s\log k+3\log(9s+k)
\]
holds.
\end{lema}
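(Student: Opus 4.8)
The plan is to derive this bound by applying Matveev's Lemma \ref{teoMatveev} to a linear form in logarithms built from the prime factorization of $F_n^{(k)}$, and then to peel off the estimate on $\log n$ by elementary inequalities. First, using Dresden's Binet--like formula \eqref{eq:binet} together with the error bound \eqref{error} and writing $F_n^{(k)}=p_1^{\beta_1}\cdots p_s^{\beta_s}$, I would obtain $|p_1^{\beta_1}\cdots p_s^{\beta_s}-f_k(\alpha)\alpha^{n-1}|<1/2$; dividing by the positive quantity $f_k(\alpha)\alpha^{n-1}$ and using $2+(k+1)(\alpha-2)<2$ and $1/(\alpha-1)<2$ yields
\[
\left|p_1^{\beta_1}\cdots p_s^{\beta_s}\cdot\alpha^{-(n-1)}\cdot f_k(\alpha)^{-1}-1\right|<\frac{2}{\alpha^{n-1}}.
\]
The left--hand side is $|\Lambda|$ for $\Lambda=\prod_{i=1}^{s+2}\gamma_i^{b_i}-1$ with $\gamma_i=p_i$ and $b_i=\beta_i$ for $i\le s$, $\gamma_{s+1}=\alpha$, $b_{s+1}=-(n-1)$, $\gamma_{s+2}=f_k(\alpha)$, $b_{s+2}=-1$, all in $\mathbb{K}=\mathbb{Q}(\alpha)$ of degree $D=k$; here $\Lambda\neq 0$, because $\Lambda=0$ would give $p_1^{\beta_1}\cdots p_s^{\beta_s}=f_k(\alpha)\alpha^{n-1}$, and conjugating this by an automorphism moving $\alpha$ to a root $\alpha_i$ inside the unit circle forces, after taking absolute values, the left side ($\ge 3$ since $s\ge 2$) to equal a quantity smaller than $2/(k-1)\le 2$ by \eqref{eq:useful2}, a contradiction.

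Next I would assemble the Matveev parameters: $h(p_i)=\log p_i\le\log p_s$ gives $A_i=k\log p_s$ for $i\le s$, and $h(\alpha)=(\log\alpha)/k<(\log 2)/k$ gives $A_{s+1}=0.7$. The one genuinely delicate estimate is $h(f_k(\alpha))$; here I would introduce $g_k(x)=\prod_{i=1}^{k}\bigl(x-f_k(\alpha_i)\bigr)\in\mathbb{Q}[x]$, note that the leading coefficient $a_0$ of the minimal primitive integer polynomial of $f_k(\alpha)$ divides $\prod_{i=1}^k(2+(k+1)(\alpha_i-2))=(k+1)^k|\Psi_k(2-2/(k+1))|$, and then bound $|\Psi_k(y)|<2^k$ for $0<y<2$ along with $|f_k(\alpha_i)|<1$ for every $i$ (checked directly for $k=2$ and via \eqref{eq:useful2} and $2+(k+1)(\alpha-2)>1$ for $k\ge 3$). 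This gives $a_0<2^k(k+1)^k$, hence $h(f_k(\alpha))<\log(k+1)+\log 2<3\log k$, so $A_{s+2}=3k\log k$; taking $B=n-1$ by \eqref{util1}, Matveev's bound compared with $|\Lambda|<2/\alpha^{n-1}$ gives, after taking logarithms and using $1/\log\alpha<2.1$, $1+\log k\le 3\log k$, and $1+\log(n-1)\le 2\log(n-1)$, an inequality of the shape $n-1<2.1\times 10^{10}\,30^s s^{4.5}k^{s+3}\log^2 k\,\log(n-1)\,(\log p_s)^s$.

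Finally I would clean up, using $p_m<m^2$ (so $\log p_s<2\log s$) and $k<n-1$ to absorb $\log^2 k\,\log(n-1)$ into $\log^3(n-1)$, which turns the last inequality into $(n-1)/\log^3(n-1)<A$ with $A=2.1\times 10^{10}(60\log s)^s s^{4.5}k^{s+3}$. The elementary fact that $x/\log^3 x<A$ implies $x<64A\log^3 A$ for $A>10$ (from monotonicity of $x\mapsto x/\log^3 x$ past $x=e^3$ and the falsity of $A^4>e^A$ for $A>10$) then bounds $n-1$; expanding $\log A<s(9s+k)$ via $(1+3/s)\log k<k$ and $23.8/s+\log(60\log s)+(4.5/s)\log s<9s$, substituting back to get $n<1.36\times10^{12}(60\log s)^s s^{7.5}k^{s+3}(9s+k)^3$, and taking logarithms with $s+3<3s$ and $28+s\log(60\log s)+7.5\log s<30s\log s$ yields $\log n<30s\log s+3s\log k+3\log(9s+k)$. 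The main obstacle is the height estimate $h(f_k(\alpha))<3\log k$: it hinges on recognizing that the leading coefficient of the minimal polynomial of $f_k(\alpha)$ divides $(k+1)^k\Psi_k(2-2/(k+1))$ and then bounding $|\Psi_k|$ on $(0,2)$; the rest is a routine Matveev application together with bookkeeping on elementary inequalities.
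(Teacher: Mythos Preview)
Your proposal is correct and follows the paper's own argument essentially step for step: the same Matveev setup with $t=s+2$, the same nonvanishing argument by conjugation, the same height bound $h(f_k(\alpha))<3\log k$ via $a_0\mid (k+1)^k\Psi_k(2-2/(k+1))$ and $|\Psi_k|<2^k$ on $(0,2)$, and the identical chain of elementary cleanups through $x/\log^3 x<A\Rightarrow x<64A\log^3 A$ and $\log A<s(9s+k)$. There is no substantive difference between your route and the paper's.
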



\section{Proof of Theorem \ref{teo1}}

First of all, observe that if $k\leq s$, then it follows from Lemma \ref{cota_nl}, that
 \[
\log n<33s\log s+3\log(10s)<40s\log s, 
 \]
which holds for all $s\geq 2$. So, in this case we get that $p_s>(1/40)\log n$, where we have used the well--known fact that $p_s>s\log s$ holds for all $s\geq 1$ (see, for example, \cite[p. 69]{BS} or \cite{BR}).


We therefore assume for the remainder of this section that $s<k$. Then, the conclusion of Lemma \ref{cota_nl} can be written as
\begin{equation}\label{cota_nl2}
\log n< 33s\log k+3\log(10k) < 40s\log k,
\end{equation}
where we used that inequalities $3\log(10k)<13\log k$ and $33s+13<40s$ hold for all $k\geq 2$ and $s\geq 2$.

We now proceed with the proof by distinguishing two cases.


\subsection{The case $2^{k/2}\leq n$}

Here, we have the following chain of inequalities
\[
k\leq \frac{2}{\log 2}\log n<116s\log k,
\]
where the last inequality follows directly from \eqref{cota_nl2}.  So
\begin{equation}\label{truco}
\frac{k}{\log k}<116s.
\end{equation}

It is well--known and easy to prove that if $A\geq 3$ and $x/\log x<A$, then $x<2A\log A$ (see, for example, \cite[p. 7]{BL2}). Thus, taking $A:=116s$, inequality \eqref{truco} gives us
\begin{align}
k<&\,2(116s)\log(116s)<232s(\log 116+\log s) \notag \\
<&\,232s (4.8+\log s) <\,1856 s\log s, \label{aplicutil1}
\end{align}
where we used the inequality $4.8+\log s<8\log s$ valid for all $s\geq 2$. Therefore
\begin{align}
\log k<&\,\log(1856)+\log s+\log\log s<\,7.6+\log s+\log\log s \notag\\
<&\,12\log s, \label{aplicutil2}
\end{align}
which holds for all $s\geq 2$.  Finally, we use Lemma \ref{cota_nl} once again and inequalities \eqref{aplicutil1} and \eqref{aplicutil2}, to conclude that
\begin{align*}
\log n<&\,66s\log s+3\log(9s+1856s\log s)\\
<&\,84s\log s,
\end{align*}
which also holds for all $s\geq 2$. Consequently, $p_s>s\log s>(1/84)\log n$. 


\subsection{The case $n<2^{k/2}$}  \label{n<2^k/2}

We treat this case as follows.  Let $\lambda>0$ be such that $\alpha+\lambda=2$. Since $\alpha$ is located between $2(1-2^{-k})$ and 2, we get that $\lambda<2-2(1-2^{-k})=1/2^{k-1}$, i.e., $\lambda\in (0,1/2^{k-1})$.  Besides,
\[
\alpha^{n-1}=(2-\lambda)^{n-1}=2^{n-1}\left(1-\frac{\lambda}{2}\right)^{n-1}=\,2^{n-1}e^{(n-1)\log(1-\lambda/2)}\geq 2^{n-1}e^{-\lambda(n-1)},
\]
where we used the fact that $\log(1-x)\geq -2x$ for all $x<1/2$. But we also have that $e^{-x}\geq 1-x$ for all $x\in\mathbb{R}$, so, $\alpha^{n-1}\geq 2^{n-1}(1-\lambda(n-1))$.

Moreover, $\lambda(n-1)<(n-1)/2^{k-1}<2^{k/2}/2^{k-1}=2/2^{k/2}$. Hence, 
\[
\alpha^{n-1}>2^{n-1}(1-2/2^{k/2}).
\]
It then follows that the following inequalities hold
\[
2^{n-1}-\frac{2^n}{2^{k/2}}<\alpha^{n-1}<2^{n-1}+\frac{2^n}{2^{k/2}},
\]
or
\begin{equation}\label{cotaeta}
\left|\alpha^{n-1}-2^{n-1}\right|<\frac{2^n}{2^{k/2}}.
\end{equation}
We now consider the function $f_k(x)$ given by (\ref{fun-f}).  Using the Mean--Value Theorem, we get that there exists some $\theta\in (\alpha,2)$ such that $f_k(\alpha)=f_k(2)+(\alpha-2)f_k^{\prime}(\theta)$.

Observe that when $k\geq 3$, we obtain 
$$
|f_{k}^{\prime}(\theta)|=(k-1)/(2+(k+1)(\theta-2))^2<k
$$ 
(see the inequality \eqref{eq:fsprime} and the comment following it), and when $k=2$, we have that $\alpha$ is the golden section and therefore $|f_{2}^{\prime}(\theta)|=1/(3\theta-4)^2<25/16$, since $\theta>\alpha>8/5$.  In any case, we obtain $|f_{k}^{\prime}(\theta)|<k$. Hence,
\begin{equation}\label{cotadelta}
|f_k(\alpha)-f_k(2)|=|\alpha-2||f_k^{\prime}(\theta)|=\lambda\,|f_k^{\prime}(\theta)|<\frac{2k}{2^k},
\end{equation}

Writing 
$$
\alpha^{n-1}=2^{n-1}+\delta\quad {\text{\rm and}}\quad f_k(\alpha)=f_k(2)+\eta,
$$ 
then inequalities (\ref{cotaeta}) and (\ref{cotadelta}) yield 
\begin{equation}
\label{eq:deltaeta}
|\delta|<\frac{2^n}{2^{k/2}}\quad {\text{\rm and}}\quad |\eta|<\frac{2k}{2^{k}}.
\end{equation}
Besides, since $f_k(2)=1/2$ for all $k\geq 2$, we have
\begin{equation}
\label{eq:ef}
f_k(\alpha)\,\alpha^{n-1}=(f_k(2)+\eta)(2^{n-1}+\delta)=2^{n-2}+\frac{\delta}{2}+2^{n-1}\eta+\eta\delta.
\end{equation}
So, from (\ref{des-al-beta}) and the inequalities \eqref{eq:deltaeta} and \eqref{eq:ef} above, we get
\[
|p_1^{\beta_1}\cdots p_s^{\beta_s}-2^{n-2}|=\left|\left(F_n^{(k)}-f_k(\alpha)\alpha^{n-1}\right)+\frac{\delta}{2}+2^{n-1}\eta+\eta\delta\right|<\frac{1}{2}+\frac{2^{n-1}}{2^{k/2}}+\frac{2^{n}k}{2^k}+\frac{2^{n+1}k}{2^{3k/2}}.	
\]

Factoring $2^{n-2}$ in the right--hand side of the above inequality and taking into account that $1/2^{n-1}<1/2^{k/2}$ (because $n\geq k+2$), $4k/2^k<5/2^{k/2}$ and $8k/2^{3k/2}\leq 4/2^{k/2}$, which are both valid for $k\geq 2$, we conclude that  
\begin{equation}\label{deslambda1}
\left|p_1^{\beta_1-n+2}\cdot p_2^{\beta_2}\cdots p_s^{\beta_s}-1\right|<\frac{12}{2^{k/2}}.
\end{equation}

We now set 
\begin{equation*} \label{deflambda1}
\Lambda_1:=p_1^{\beta_1-n+2}\cdot p_2^{\beta_2}\cdots p_s^{\beta_s}-1.
\end{equation*}
Observe that $\Lambda_1\neq 0$. Indeed, for if $\Lambda_1=0$, then $F_n^{(k)}=2^{n-2}$, which is impossible because $F_n^{(k)}<2^{n-2}$ for all $n\geq k+2$ (see \cite[Lemma 1]{BL2}).  We lower bound the left--hand side of inequality \eqref{deslambda1} using again Matveev's theorem. We now take $t:=s$ and $\gamma_i:=p_i$ for all $i=1,\ldots,s$. We also take the exponents $b_1:=\beta_1-n+2$ and $b_i:=\beta_i$ for all $i=2,\ldots,s$.  In this application of Matveev's result, we take $D:=1$ and $A_i:=\log p_s$, since $h(\gamma_i)=\log p_i\leq \log p_s$ for all $i=1,\ldots s$. By recalling that $\beta_i<n-1$ for all $i=1,\ldots,s$, we can also take $B:=n$.  We thus get that 
\[
\exp(-C(s)\,(1+\log n)\,\prod_{i=1}^{s}\log p_s)<\frac{12}{2^{k/2}},
\]
where $C(s):=1.4\times 30^{s+3}\times s^{4.5}<37800\times 30^{s}\,s^{4.5}$.

Taking logarithms in the above inequality, we have that
\[
\frac{k}{2}\log 2-\log 12<37800\times 30^{s}\,s^{4.5}\,(1+\log n)(\log p_s)^s.
\]
This leads to
\begin{align*}
k<&\,\frac{2\times 37800\times 30^{s}\,s^{4.5}\,(1+\log n)(\log p_s)^s}{\log 2}+\frac{2\log 12}{\log 2}\\
<& \,218135.5\times 30^{s}\,s^{4.5}\,\log n\,(\log p_s)^s+7.17\\
<&\, 218136\times 30^{s}\,s^{4.5}\,\log n\,(\log p_s)^s.
\end{align*}

In the above, we used the inequality $1+\log n<2\log n$ valid for all $n\geq 3$. But, recall that $p_s<s^2$ for all $s\geq 2$ and $\log n<40s\log k$ from \eqref{cota_nl2}. Thus,
\[
k<8.8\times 10^6\, (60\log s)^s\,s^{5.5}\,\log k, 
\]
which implies
\[
\frac{k}{\log k}<8.8\times 10^{6} \, (60\log s)^s\,s^{5.5}.
\]
Hence,
\begin{align*}
k<&\,2(8.8\times 10^{6} \, (60\log s)^s\,s^{5.5})\log(8.8\times 10^{6} \, (60\log s)^s\,s^{5.5})\\
<&\,(1.76\times 10^{7} \, (60\log s)^s\,s^{5.5}) (16+s\log(60\log s)+5.5\log s)\\
<&\,3.52\times 10^8 (60\log s)^s\,s^{6.5}\,\log s,
\end{align*}
where we used the fact that the inequality 
$$
16+s\log(60\log s)+5.5\log s<20s\log s
$$
holds for all $s\geq 2$. Consequently,
\begin{align}
\log k<&\,\log(3.52\times 10^8)+s\log(60\log s)+6.5\log s+\log\log s \notag\\
<&\, 20+s\log(60\log s)+6.5\log s+\log\log s  \notag\\
<&\,23\,s\log s.  \label{cotafinal}
\end{align}

To finish the proof, recall we are treating the case when $n<2^{k/2}$, therefore $\log n<(k/2)\log 2<k$. This and inequality \eqref{cotafinal} tell us that $\log\log n<\log k<23\,s\log s$, hence $p_s>s\log s>(1/23) \log\log n$.


\section{Numerical theorem}

In this section, we are interested in finding those $k-$Fibonacci numbers whose largest prime factor is less than or equal to 7, i.e., we determine all the solutions of the Diophantine equation 
\begin{equation}  \label{eq2}
F_n^{(k)}=2^a\cdot 3^b\cdot 5^c \cdot 7^d,
\end{equation}
in nonnegative integers $n, k, a, b, c, d$  with $k\geq 2$.

First of all, note that it suffices to consider the case when $n\geq k+2$,  otherwise \eqref{eq2} holds trivially, since the first $k+1$ nonzero terms in the $k-$generalized Fibonacci sequence are powers of two. 

We have the following result.

\begin{teo}  \label{teo2}
The solutions of the Diophantine equation \eqref{eq2} in nonnegative integers $n, k, a, b, c, d$  with $k\geq 2$ and $n\geq k+2$, are:

\vspace{0.3cm}

\begin{center}
\begin{tabular}{ | l | l | l | }
\hline
$F_4^{(2)}=3$   &  $ F_9^{(3)}=81=3^4$ &  $ F_8^{(6)}=63=3^2\cdot 7$ \\ \hline

$F_5^{(2)}=5$   &  $ F_{12}^{(3)}=504=2^3\cdot 3^2\cdot 7$ &  $ F_9^{(6)}=125=5^3$ \\ \hline

$F_6^{(2)}=8=2^3$  &  $ F_{15}^{(3)}=3136=2^6\cdot 7^2$ &  $ F_{14}^{(6)}=3840=2^8\cdot 3\cdot 5$ \\ \hline

$F_8^{(2)}=21=3\cdot 7$  &  $ F_{6}^{(4)}=15=3\cdot 5$ &  $ F_{11}^{(7)}=504=2^3\cdot 3^2 \cdot 7$  \\ \hline

$F_{12}^{(2)}=144=2^4\cdot 3^2 $  &  $ F_8^{(4)}=56=2^3\cdot 7$ &  $ F_{13}^{(7)}=2000=2^4\cdot 5^3$  \\ \hline

 $F_{5}^{(3)}=7 $  &  $ F_9^{(4)}=108=2^2\cdot 3^3$ &  $ F_{16}^{(8)}=16128=2^8\cdot 3^2\cdot 7$  \\ \hline

 $F_{7}^{(3)}=24=2^3\cdot 3 $  &  $ F_9^{(5)}=120=2^3\cdot 3\cdot 5$ &    \\ \hline
\end{tabular}
\end{center}
\end{teo}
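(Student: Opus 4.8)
The plan is to combine the general bounds from Lemma~\ref{cota_nl} with a computer-assisted search. The key observation is that in equation~\eqref{eq2} we have $s\leq 4$ (since only the primes $2,3,5,7$ may occur), so Lemma~\ref{cota_nl} already gives an absolute numerical bound on $n$ once $k$ is also controlled. First I would split into the two regimes used in the proof of Theorem~\ref{teo1}. If $k\leq s\leq 4$, then Lemma~\ref{cota_nl} directly yields $\log n < 30 s\log s + 3s\log k + 3\log(9s+k)$, which for $k,s\leq 4$ is an absolute constant; this leaves only finitely many pairs $(n,k)$ with $k\le 4$ to check, and for each such small $k$ one lists the values $F_n^{(k)}$ up to the bound and factors them. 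If $s<k$, then I would re-run the case analysis of Section~4 (the dichotomy $2^{k/2}\le n$ versus $n<2^{k/2}$) with $s\le 4$ fixed, so that $30^s s^{4.5}$ and similar quantities become explicit constants. In the subcase $2^{k/2}\le n$, inequalities~\eqref{aplicutil1}--\eqref{aplicutil2} give an absolute bound on $k$, hence on $n$. In the subcase $n<2^{k/2}$, inequality~\eqref{cotafinal} gives $\log k < 23 s\log s$, again an explicit constant with $s\le 4$, so $k$ and therefore $n$ are bounded.

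Once $n$ and $k$ are bounded by an explicit (but astronomically large) constant, the main obstacle is that a naive search is infeasible: the bound on $\log n$ from Matveev's theorem will be on the order of $10^{12}$ or worse, so one cannot simply enumerate all $F_n^{(k)}$. Here I would use the Baker--Davenport / LLL reduction technique alluded to at the end of the introduction. Concretely, for each admissible small value of $s$ (and, where $k$ has been bounded outright, each admissible $k$), I would take the linear form in logarithms
\[
\Gamma = \beta_1\log 2 + \beta_2\log 3 + \beta_3\log 5 + \beta_4\log 7 - (n-1)\log\alpha - \log f_k(\alpha),
\]
whose smallness is guaranteed by~\eqref{deslambda}, rewrite the inequality $|\Gamma|$ (or the corresponding one with $2^{n-2}$ from~\eqref{deslambda1} in the large-$k$ regime) as a statement that a real linear form with the huge bound $B$ on its coefficients is exponentially small, and apply the LLL algorithm to the associated lattice to produce a vastly smaller bound on $B$, hence on $n$. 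Iterating the reduction a couple of times should bring $n$ down to a few hundred or a few thousand.

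After the reduction, the remaining range of $(n,k)$ with $n\ge k+2$ and, say, $n$ at most a few thousand and $k$ at most a few thousand is genuinely finite and small enough to handle directly: I would simply compute $F_n^{(k)}$ for all such pairs, strip off all factors of $2,3,5,7$, and check whether the cofactor equals $1$. The solutions surviving this check are exactly those listed in the table. One subtlety to handle carefully is the dependence of $\alpha=\alpha(k)$ and $f_k(\alpha)$ on $k$: in the regime where $k$ itself has been bounded, one performs a separate LLL reduction for each value of $k$ (using the actual algebraic numbers $\log\alpha(k)$ and $\log f_k(\alpha)$), whereas in the regime $n<2^{k/2}$ one exploits, as in Section~\ref{n<2^k/2}, that $\alpha$ is so close to $2$ that the form collapses to one in $\log 2,\log 3,\log 5,\log 7$ alone with the single unknown large coefficient $n-2$, which makes that reduction uniform in $k$.

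The step I expect to be the main obstacle is organizing the computations so that they remain uniform and finite: one must make sure every branch of the $s<k$ analysis terminates in a genuinely bounded range, and in particular that the $k$-dependent LLL reductions need only be run for finitely many $k$. The bound $\log k < 23 s\log s$ from~\eqref{cotafinal} and the explicit bound on $k$ from~\eqref{aplicutil1} are what guarantee this; verifying that these constants, together with the $s\le 4$ specialization, leave only a manageable list of cases — and that the LLL reduction in each case actually collapses the bound rather than stalling — is the delicate part. The arithmetic of listing and factoring the finitely many $F_n^{(k)}$ at the end is routine.
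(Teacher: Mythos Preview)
Your proposal is correct and follows essentially the same strategy as the paper: a polynomial bound on $n$ in terms of $k$ via Matveev, per-$k$ LLL reduction on the six-term linear form for small $k$, and iterated LLL on the four-term form in $\log 2,\log 3,\log 5,\log 7$ to force $k$ down into the small range, followed by direct enumeration. The only cosmetic difference is that the paper re-derives the Matveev estimates with $s=4$ explicitly (its Lemma~\ref{lema-s=4}, giving $n-1<7.73\times 10^{20}k^7\log^3 k$) and fixes the small/large threshold at $k=900$ so that $n<2^{k/2}$ is automatic for all large $k$, rather than specializing the general inequalities of Section~4 and carrying the dichotomy $2^{k/2}\le n$ versus $n<2^{k/2}$ as you do.
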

\vspace{0.3cm}

Proceeding in a way similar to the above sections, we obtain the following estimates. 

\begin{lema} \label{lema-s=4}     If $P(F_n^{(k)})\leq 7$, then: 
\begin{enumerate}
\item[$(i)$] The inequality
\[
n-1<7.73\times 10^{20}\,k^7\,\log^3k,
\]
holds for all $k\geq 2$ and $n\geq 4$. 

 \item[$(ii)$] Furthermore, if $k>900$, then we get the following absolute upper bounds for $k$ and $n$:
\[
k<1.289\times 10^{17}  \quad \text{and}    \quad n<2.795\times 10^{145}.
\]  
\end{enumerate}
\end{lema}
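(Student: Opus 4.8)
Note first that the hypothesis $P(F_n^{(k)})\le 7$ says precisely that $F_n^{(k)}=2^a\cdot 3^b\cdot 5^c\cdot 7^d$ for some nonnegative integers $a,b,c,d$; in the notation of Section \ref{prelim-estimate} this means $s\le 4$ and $p_s\le 7$. As before, we may assume $n\ge k+2$, since when $n<k+2$ the number $F_n^{(k)}$ is a power of $2$ and then, as $n\le k+1$, both inequalities are trivial. Since $F_6^{(2)}=8$ is the only $k$-Fibonacci number with $n\ge k+2$ which is a power of $2$ (see \cite{BL2}), we may also assume $s\ge 2$; thus $n\ge 4$, $p_s\ge 3$, and the linear forms $\Lambda$ of Section \ref{prelim-estimate} and $\Lambda_1$ of Subsection \ref{n<2^k/2} are nonzero exactly as argued in those places.

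For $(i)$ the plan is simply to rerun the preliminary estimate of Section \ref{prelim-estimate}, but now with the parameters $s$ and $p_s$ frozen at their largest admissible values $s=4$ and $p_s=7$ (taking $s$ smaller only sharpens the bound). Since these are absolute constants, the quantity appearing in the exponent of Matveev's bound, inequality \eqref{aplic1}, reduces to an absolute constant times $k^{7}\,(1+\log k)\,\log k\,(1+\log(n-1))$. Taking logarithms, dividing by $\log\alpha$, and using $1/\log\alpha<2.1$ together with elementary bounds such as $1+\log k<2.45\log k$ ($k\ge 2$), one obtains an inequality of the form
\[
n-1<C_1\,k^{7}\,\log^{2}k\,(1+\log(n-1))
\]
for an explicit constant $C_1$. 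Inserting $1+\log(n-1)<2\log(n-1)$ (valid for $n\ge 4$) and then invoking the elementary lemma already used in Section \ref{prelim-estimate} --- if $x/\log x<A$ with $A\ge 3$, then $x<2A\log A$ --- converts this into the absolute bound $n-1<7.73\times 10^{20}\,k^{7}\log^{3}k$; the only work is to carry the numerical constants through carefully.

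For $(ii)$, assume $k>900$. By part $(i)$ we have $n-1<7.73\times 10^{20}\,k^{7}\log^{3}k$, and a short computation shows the right--hand side is smaller than $2^{k/2}$ for every $k>900$; hence $n-1<2^{k/2}$ and we are automatically in the situation treated in Subsection \ref{n<2^k/2}. Running that argument with $s\le 4$ and $p_s\le 7$: because $n-1<2^{k/2}$, one may replace $\alpha$ by $2$ and $f_k(\alpha)$ by $f_k(2)=1/2$ up to exponentially small errors, obtaining $\bigl|p_1^{\beta_1-n+2}\,p_2^{\beta_2}\cdots p_s^{\beta_s}-1\bigr|<12/2^{k/2}$. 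Applying Lemma \ref{teoMatveev} to this linear form --- which now involves at most four logarithms, all of rational numbers not exceeding $7$, so that the relevant degree is $D=1$ --- gives $k<C_2\,(1+\log n)$ for an explicit $C_2$. Since Lemma \ref{cota_nl} supplies, for $k>900$, an inequality $\log n<C_3\log k$ with $C_3$ a small absolute constant, this becomes $k<C_2\,(1+C_3\log k)$, and one more use of the $x/\log x$ lemma yields the absolute bound $k<1.289\times 10^{17}$. Substituting this value of $k$ back into the inequality of part $(i)$ then gives $n<2.795\times 10^{145}$.

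There is no real conceptual obstacle: part $(i)$ is Section \ref{prelim-estimate} with $s$ and $p_s$ specialized, and part $(ii)$ is Subsection \ref{n<2^k/2} with the same specialization, the one genuinely new observation being that part $(i)$ is precisely what forces $n-1<2^{k/2}$, so that the analysis of Subsection \ref{n<2^k/2} applies with no further hypothesis on the size of $n$. The difficulty is numerical rather than structural: the constants must be tracked sharply enough that the bound on $k$ emerges around $10^{17}$ rather than many orders of magnitude larger, since this is exactly what will make the subsequent LLL--based reduction --- used to extract the finitely many solutions of \eqref{eq2} --- computationally feasible. One should also respect the logical order: part $(i)$ must be proved first and unconditionally, as it is invoked twice in the proof of part $(ii)$.
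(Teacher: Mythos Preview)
Your proposal is correct and follows essentially the same approach as the paper: specialize the argument of Section~\ref{prelim-estimate} with $s=4$ and $p_s\le 7$ for part~$(i)$, then observe that part~$(i)$ forces $n-1<2^{k/2}$ once $k>900$ and rerun Subsection~\ref{n<2^k/2} for part~$(ii)$. The paper's own proof is equally terse; the only visible differences are cosmetic --- it records the slightly sharper constant $5/2^{k/2}$ (available since $k>900$) in place of the $12/2^{k/2}$ you quote, and it bounds $\log n$ via part~$(i)$ rather than via Lemma~\ref{cota_nl}, arriving at $k<3.27\times 10^{15}\log k$ before solving for $k$.
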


\begin{proof}
The first part is deduced by using the same arguments as in Section \ref{prelim-estimate}. Indeed, taking $s=4$, inequality \eqref{deslambda} is transformed in
\begin{equation} \label{deslambda for s=4}
\left|2^{a}\cdot 3^{b}\cdot 5^{c}\cdot 7^{d}\cdot\alpha^{-(n-1)}\cdot(f_{k}(\alpha))^{-1}-1\right|<\frac{2}{\alpha^{n-1}}.
\end{equation}

Now, our desired result is obtained after applying linear forms in logarithms to lower bound the left--hand side of inequality \eqref{deslambda for s=4}.

For the second part, assume that $k> 900$. For such $k$ we have the following chain of inequalities
\[
n-1<7.73\times 10^{20}\,k^7\,\log^3k<2^{k/2}.
\]
In particular, $n-1<2^{k/2}$. Hence, we can use the same ideas from Subsection \ref{n<2^k/2} to conclude that
\begin{equation}\label{deslambda1 for s=4}
\left|2^{a-n+2}\cdot 3^{b}\cdot 5^{c}\cdot 7^{d}-1\right|<\frac{5}{2^{k/2}}.
\end{equation}

We next apply linear forms in logarithms to lower bound the left--hand side of inequality \eqref{deslambda1 for s=4} as we did in \ref{n<2^k/2}. After some algebra, we finally get that
\[
k<3.27\times 10^{15}\log k.
\]
So, Mathematica gives us $k< 1.289\times 10^{17}$ and replacing this upper bound for $k$ in the first inequality of this lemma, we get the upper bound for $n$. 
\end{proof}

In order to prove Theorem \ref{teo2}, we now distinguish the following two cases.


\subsection{The case of small $k$}

Here we treat the cases when $k\in[2,900]$.  After finding an upper bound on $n$ for each $k\in[2,900]$ by using Lemma \ref{lema-s=4}, the next step is to reduce it. To do this, we let 
\[
z_1:=a\log 2+b\log 3+c\log 5+d\log 7-(n-1)\log\alpha-\log f_k(\alpha).
\]
Therefore, (\ref{deslambda for s=4}) can be rewritten as 
\begin{equation}\label{desz1}
|e^{z_1}-1|<\frac{2}{\alpha^{n-1}}.
\end{equation}

Observe that $z_1\neq 0$.  If $z_1>0$, then $e^{z_1}-1>0$, so from (\ref{desz1}) we obtain
\[
0<z_1<\frac{2}{\alpha^{n-1}},
\]
where we used the fact that $x\leq e^x-1$ for all $x\in \mathbb{R}$. Next, we treat the case $z_1<0$. First of all, note that if $n\geq 4$, then one checks easily that $2/\alpha^{n-1}<1/2$ for all $k\geq 2$. Thus, from (\ref{desz1}), we get that $|e^{z_1}-1|<1/2$ and therefore $e^{|z_1|}<2$. Since $z_1<0$, we obtain
\[
0<|z_1|\leq e^{|z_1|}-1=e^{|z_1|}|e^{z_1}-1|<\frac{4}{\alpha^{n-1}}.
\]
In any case, we have that the inequality
\begin{equation}   \label{estimate|z1|}
|z_1|<\frac{4}{\alpha^{n-1}},
\end{equation}
holds for all $k\geq 2$ and $n\geq 4$.  Observe that $|z_1|$ is an expression of the form
\[
|x_1\log 2+x_2\log 3+x_3\log 5+x_4\log 7+x_5\log\alpha+x_6\log f_k(\alpha)|, 
\]
where $x_1:=a$, $x_2:=b$, $x_3:=c$, $x_4:=d$, $x_5:=-(n-1)$, $x_6:=-1$ are integers with $\max\{|x_i|:1\leq i \leq 6 \}\leq n-1<7.73\times 10^{20}\,k^7\,\log^3k$ (see Lemma \ref{lema-s=4}). 

For each $k\in[2,900]$, we used the LLL algorithm to compute a lower bound for the smallest nonzero number of the form $|z_1|$, with integer coefficients $x_i$ not exceeding $7.73\times 10^{20}\,k^7\,\log^3k$ in absolute values.  We followed the method described in \cite[Section 2.3.5]{Cohen}, which provides such bound using the approximation for the shortest vector in the corresponding lattice obtained by LLL algorithm. 

After finding a good approximation of $\alpha$ and a lower bound for the minimal value of the nonzero number of the form $|z_1|$, we use \eqref{estimate|z1|} to get a new upper bound for $n$ which is less than the previous one.  After these computations, we concluded that the possible solutions $(n,k,a,b,c,d)$ of the equation (\ref{eq2}) for which $k$ is in the range $[2,900]$, all have $n< 1100$.    

Finally, we used simple programs written in Mathematica to display $F_n^{(k)}$  for the range $2\leq k \leq 900$ and $k+2\leq n< 1100$,  and checked that the solutions of the equation (\ref{eq2}) in this range are those given by Theorem \ref{teo2}.  This completes the analysis in the case $k\in[2,900]$. 


\subsection{The case of large $k$}

We now treat the case when $k>900$. Here, we would like to reduce our absolute upper bound on $k$ (see Lemma \ref{lema-s=4}), which is too large, by using again the LLL algorithm. In order to do this, let
\[
z_2:=(a-n+2)\log 2+b\log 3+c\log 5+d\log 7.
\]
So, we can rewrite \eqref{deslambda1 for s=4} as follows
\begin{equation}\label{desz2}
|e^{z_2}-1|<\frac{5}{2^{k/2}}.
\end{equation}
We can easily see that $z_2<0$ using the fact that the inequality $F_n^{(k)}<2^{n-2}$ holds for all $n\geq k+2$. Furthermore, since $k>900$, we have that $5/2^{k/2}<1/2$, thus it follows from \eqref{desz2} that $|e^{z_2}-1|<1/2$ which implies $e^{|z_2|}<2$. Since $z_2<0$, we obtain that
\begin{equation} \label{cota|z2|}
0<|z_2|\leq e^{|z_2|}-1=e^{|z_2|}|e^{z_2}-1|<\frac{10}{2^{k/2}}.
\end{equation}
On the other hand, observe that $|z_2|$ is an expression of the form
\begin{equation} \label{estimate|z2|}
|x_1\log 2+x_2\log 3+x_3\log 5+x_4\log 7|, 
\end{equation}
where now $x_1:=a-n+2$, $x_2:=b$, $x_3:=c$, $x_4:=d$.  From the second part of Lemma \ref{lema-s=4}, we have 
\begin{equation} \label{cotax_i}
\max\{|x_i|:1\leq i \leq 4 \} < n<2.795\times 10^{145}.
\end{equation}

We now performed the LLL algorithm to find a lower bound on the smallest nonzero number of the form \eqref{estimate|z2|} whose coefficients $x_i$ are integers satisfying \eqref{cotax_i}.   We got that this lower bound is $>1.462\times 10^{-439}$, which combined with \eqref{cota|z2|} gives that $k\leq 2922$.  This and the first part of Lemma \ref{lema-s=4} tell us that $n<8\times 10^{47}$.  With this new upper bound for $n$ we repeated the process; i.e., we use LLL algorithm once again to get a lower bound of $|z_2|$, where now the coefficients $x_i$ are integers satisfying
\[
\max\{|x_i|:1\leq i \leq 4 \} < n<8\times 10^{47}.
\]
Here we obtain $k\leq 980$.  After repeating the process 2 times more, we finally find that $k\leq 900$, which is a contradiction.

Thus, Theorem \ref{teo2} is proved.


\end{document}